\documentclass[12pt]{article}
\textheight 225mm \textwidth 165mm \topmargin -1.0cm \oddsidemargin
1.8 cm \evensidemargin 1.8 cm \hoffset=-1.8cm

\usepackage{amsthm}
\usepackage{amsmath}
\usepackage{amssymb}
\usepackage{latexsym}
\usepackage{amsfonts}
\usepackage{color}
\usepackage{mathrsfs}
\usepackage{epsfig}
\usepackage{cite}
\newtheorem{theorem}{\bf Theorem}[section]

\newtheorem{lemma}[theorem]{\bf Lemma}

\begin{document}
\title{Diophantine Equation with Balancing-like Sequences Associated to the Pillai-Tijdeman-type Problem}
\author{Bijan Kumar Patel and Prashant Tiwari}
\date{}
\maketitle
\begin{abstract} \noindent
Let $\{x_{n}\}_{n \geq 0}$ be the balancing-like sequence defined by $x_{n+1} = A x_{n} - x_{n-1}$, for $A>2$, where $x_0 = 0$ and $x_1 = 1$. In this paper, we demonstrate how to find all the solutions of the Diophantine equation, $C_{1}x_{n_{1}} + C_{2}x_{n_{2}} + C_{3}x_{n_{3}} = C_{4}x_{n_4} + C_{5}x_{n_5} + C_{6}x_{n_{6}}$, in fixed integer $A \geq 3$, $n_1 > n_2 > n_3\geq 0, n_4 >n_5 > n_6 \geq 0,$ and $C_{1}x_{n_{1}} \neq C_{4} x_{n_4}$, where $C_{1}, C_{2}, C_{3}, C_{4}, C_{5}, C_{6}$ are given integers such that $C_{1} C_{2} C_{3} \neq 0$.
\end{abstract}
\noindent \textbf{\small{\bf Keywords}}: Balancing sequence; Balancing-like sequence; Diophantine equations. \\
{\bf 2010 Mathematics Subject Classification:} 11B39; 11B83; 11D61.

\section{Introduction}
A balancing number $B$ is a natural number satisfying the Diophantine
equation 
\[
\sum_{j=1}^{B-1} j = \sum_{k = B+1}^{R} k,
\]
for some natural number $R$ \cite{Behera}. If $B$ is a balancing number, then $8B^2 + 1$ is a perfect square. The $n$-th balancing number is denoted by $B_{n}$ and satisfy the binary recurrence $B_{n+1} = 6 B_{n} - B_{n-1}$ with initial values $(B_{0}, B_{1}) = (0, 1).$ The balancing sequence has been studied extensively and generalized in many ways \cite{Panda, Panda1}. As a generalization of the balancing sequence, Panda and Rout \cite{Panda} introduced a family of binary recurrences defined by
\[
x_{n+1} = A x_{n} - x_{n-1}, ~~x_0 = 0,~ x_1 = 1,
\]
for any fixed integer $A>2$. Afterwards, these sequences were called as balancing-like sequences $\{ x_n \}_{n \geq 0}$ since the particular case corresponding to A = 6 coincides with the balancing sequence. When $A = 2$, the binary sequence $x_{n+1} = 2 x_{n} - x_{n-1}$ with $x_0 = 0, x_1 = 1$ generates the natural numbers. The sequence $\{ x_n \}_{n \geq 0}$ behaves like the sequence of natural numbers, and for this reason, Khan and Kwong \cite{Khan} called these sequences as generalized natural number sequences. For each $n$, $D x_{n}^2 + 1$ is a perfect rational square, where $D = \frac{A^2 -4}{4}$, is a perfect square and its positive square root is called a Lucas-balancing-like number \cite{Panda}. They have also noted that all the balancing-like sequences are strong divisibility sequences. Further, different associate sequences of a balancing-like sequence has been studied by Panda and Pradhan \cite{Panda1}. The Binet form of the balancing-like sequence $\{x_{n} \}_{n \geq 0}$ is given by 
\[
x_n = \frac{\gamma^n - \delta^n}{\sqrt{A^2 - 4}}, ~~n = 1, 2, \dots,
\]
where $\gamma := \frac{A + \sqrt{A^2 - 4}}{2}$ and $\delta := \frac{A - \sqrt{A^2 - 4}}{2}$ are the roots of the characteristic equation $X^2 - AX + 1 = 0$. It is easy to see that $\gamma  = \delta^{-1}.$ Balancing-like sequence satisfies the following divisibility property.

\begin{lemma}[Theorem 2.8, \cite{Panda}]\label{lm}
For all $m, n \in \mathbb{N}$, $\gcd (x_m, x_n) = x_{\gcd(m, n)}$.
\end{lemma}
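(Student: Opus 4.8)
The plan is to prove this exactly as one proves the classical identity $\gcd(F_m, F_n) = F_{\gcd(m,n)}$ for Fibonacci numbers, exploiting the fact that $\{x_n\}$ is a strong divisibility sequence. The entire argument rests on two ingredients: an addition formula expressing $x_{m+n}$ through lower-index terms, and the coprimality of consecutive terms. Since $A \geq 3$, the terms $x_n$ are strictly increasing positive integers for $n \geq 1$, so every greatest common divisor below is well defined.

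First I would establish the addition formula
\[
x_{m+n} = x_m\, x_{n+1} - x_{m-1}\, x_n,
\]
which follows directly from the Binet form $x_n = (\gamma^n - \delta^n)/(\gamma - \delta)$ together with $\gamma\delta = 1$: when the right-hand side is expanded, the mixed cross terms collapse under $\gamma\delta = 1$ and what remains is precisely $(\gamma-\delta)^{-1}(\gamma^{m+n} - \delta^{m+n}) = x_{m+n}$. Next I would show that consecutive terms are coprime, i.e. $\gcd(x_n, x_{n+1}) = 1$. This is immediate from the recurrence $x_{n+1} = A x_n - x_{n-1}$, which gives $\gcd(x_{n+1}, x_n) = \gcd(x_{n-1}, x_n)$; hence the quantity is constant along $n$ and equals $\gcd(x_1, x_0) = \gcd(1, 0) = 1$.

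With these tools in hand, the core step is the reduction $\gcd(x_{m+n}, x_n) = \gcd(x_m, x_n)$. Reducing the addition formula modulo $x_n$ yields $x_{m+n} \equiv x_m x_{n+1} \pmod{x_n}$, and since $\gcd(x_{n+1}, x_n) = 1$ the factor $x_{n+1}$ may be discarded, giving $\gcd(x_{m+n}, x_n) = \gcd(x_m x_{n+1}, x_n) = \gcd(x_m, x_n)$. Iterating this relation, by repeatedly subtracting $n$ from the larger index, mirrors the Euclidean algorithm on the indices: writing $m = qn + r$ with $0 \le r < n$, one obtains $\gcd(x_m, x_n) = \gcd(x_r, x_n) = \gcd(x_n, x_r)$, which runs parallel to $\gcd(m, n) = \gcd(n, r)$. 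An induction on the number of Euclidean steps, with base case $r = 0$ (where $n \mid m$, so $\gcd(m,n) = n$ and $\gcd(x_m, x_n) = \gcd(0, x_n) = x_n = x_{\gcd(m,n)}$ using $x_0 = 0$), then delivers $\gcd(x_m, x_n) = x_{\gcd(m,n)}$.

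I expect the only delicate points to be bookkeeping rather than conceptual: verifying the signs and index shifts in the addition formula, and handling the termination and base case of the Euclidean induction cleanly, in particular the role of $x_0 = 0$ in closing the final step. The conceptual heart of the argument, namely that coprimality of consecutive terms lets the multiplier $x_{n+1}$ drop out of the gcd, is robust and identical to the Fibonacci setting.
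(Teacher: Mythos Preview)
Your argument is correct and is exactly the classical strong-divisibility proof: an addition formula plus coprimality of consecutive terms lets the Euclidean algorithm on indices descend to the gcd. The addition formula $x_{m+n} = x_m x_{n+1} - x_{m-1} x_n$ does check out under $\gamma\delta = 1$, the coprimality step is clean, and the induction closes properly with $x_0 = 0$.

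There is nothing to compare against here: the paper does not supply its own proof of this lemma. It is quoted verbatim as Theorem~2.8 of \cite{Panda} and used as a black box (in Subcase~1 of the proof of the bounding lemma). So your write-up actually goes beyond what the present paper provides.
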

Furthermore, one can prove by induction that the inequality
\begin{equation}\label{eq1}
  \gamma^{n-2} \leq x_{n} \leq \gamma^{n-1}  
\end{equation}

holds for all positive integers $n$.\\

\noindent
Sahukar and Panda \cite{Sahukar} dealt with the Brocard-Ramanujan-type equations $x_{n_1} x_{n_2} \dots x_{n_k} \pm 1 = x_{m}$ or $y_m$ or $y_{m}^2$ where $\{x_{n}\}_{n \geq 0}$ and $\{y_{m}\}_{m \geq 0}$ are either balancing-like sequences or associated balancing-like sequences. Subsequently they \cite{Sahukar1} showed that each balancing-like sequence has at most three terms which are one away from perfect squares.

Shorey and Tijdeman \cite{Shorey} proved under suitable conditions, that the equation $Ax^m + By^m = C x^n + D y^n$ implies that max$\{n,m\}$ is bounded by a computable constant depending only on $A, B, C, D$, where $A, B, C, D, m, n > 0$. Recently, Ddamulira et al. \cite{Ddamulira} studied a variation of the equation $Ax^m + By^m = C x^n + D y^n$ with the terms of the Lucas sequence $\{U_{n}\}_{n \geq 0}$. They showed that there are two types of solutions of the Diophantine equation $AU_{n} + BU_{m} = CU_{n_1} + DU_{m_1}$, sporadic solutions and parametric solutions. 

Motivated by the above works, we study the Diophantine equation 
\begin{equation}\label{eq2}
    C_{1}x_{n_{1}} + C_{2}x_{n_{2}} + C_{3}x_{n_{3}} = C_{4}x_{n_4} + C_{5}x_{n_5} + C_{6}x_{n_{6}} 
\end{equation}
with $n_1 > n_2 > n_3\geq 0, n_4 >n_5 > n_6 \geq 0,$ and $C_{1}x_{n_{1}} \neq C_{4} x_{n_4}$, where $\{x_{n} \}_{n \geq 0}$ is a balancing-like sequence and $C_{1}, C_{2}, C_{3}, C_{4}, C_{5}, C_{6}$ are given integers such that $C_{1} C_{2} C_{3} \neq 0$. More precisely, our main result is the following.

\begin{theorem}\label{th:1}
Let $X := \max \{ |C_{1}|,|C_{2}|, |C_{3}|, |C_{4}|, |C_{5}|, |C_{6}|\}$ and $\psi:=(3+\sqrt{5})/2$ be the smallest possible $\gamma$.  Relabeling the variables $(n_1, n_2, n_3, n_4, n_5, n_6)$ as $(m_1, m_2, m_3, m_4, m_5, m_6),$ where $m_1 \geq m_2 \geq m_3 \geq m_4 \geq m_5 \geq m_6$. If $n_1 =n_4$, we rewrite the Diophantine equation \eqref{eq2} as $$(C_{1}-C_{4})x_{n_{1}} + C_{2}x_{n_{2}} + C_{3}x_{n_{3}} = C_{5}x_{n_5} + C_{6}x_{n_{6}},$$ and change $(C_1, C_2, C_3 ,C_4, C_5, C_6)$ to $(C_1-C_4, C_2, C_3, C_5, C_6, 0)$. Thus, $n_1>n_4$.   Furthermore, we change the sign of some of the coefficients $(C_1, C_2, C_3 ,C_4, C_5, C_6)$  so that the Diophantine equation \eqref{eq2} becomes 
\begin{equation}\label{e1}
    A_1 x_{m_{1}} + A_2 x_{m_{2}} + A_3 x_{m_{3}} + A_4 x_{m_{4}} + A_5 x_{m_{5}}+ A_6 x_{m_{6}}=0.
\end{equation}

\noindent
Assume $A \leq 308 X$ for any fixed integer $A >2$ of the characteristic equation $X^2 - AX + 1 = 0$. Then, the solutions of the Diophantine equation \eqref{e1} are of two types:
\begin{enumerate}
    \item[(i)] Sporadic ones. These are finitely many and they satisfy: 
    \begin{align*}
&m_6 \leq \frac{\log (12 X)}{\log \psi}, m_5 \leq \frac{\log (1000 X^3)}{\log \psi}, m_4 \leq \frac{\log (123000 X^5)}{\log \psi}, \\
&m_3 \leq \frac{\log (17700000 X^7)}{\log \psi}, m_2 \leq \frac{\log (4530000000 X^9)}{\log \psi}, m_1 \leq \frac{\log (99660000000 X^{10})}{\log \psi}.
\end{align*}

\item[(ii)] Parametric ones. These are of one of the two forms:
\[
(m_1, m_2, m_3, m_4, m_5, m_6) = (m_5 + l, m_5 + k, m_5 + j, m_5 + i, m_5, 0),
\]
where 
\begin{align*}
    &l \leq \frac{\log (104000000 X^7)}{\log \psi},~~~ k \leq \frac{\log (4720000 X^6)}{\log \psi},\\
    &j \leq \frac{\log (18500 X^4)}{\log \psi} ~~~\text{and}~~~i \leq \frac{\log (130 X^2)}{\log \psi},
\end{align*}
and $\gamma$ is a root of $A_{1}X^l + A_{2}X^k + A_{3}X^j + A_4X^i + A_5=0$, or of the form
\[
(m_1, m_2, m_3, m_4, m_5, m_6) = (m_6 + m, m_6 + l, m_6 + k, m_6 + j, m_6 + i, m_6), 
\]
  where 
  \begin{align*}
    m & \leq \frac{\log (8305000000 X^9)}{\log \psi},~~~ l \leq \frac{\log (377500000 X^8)}{\log \psi},~~~ k \leq \frac{\log (1485000 X^6)}{\log \psi},\\
    &j \leq \frac{\log (10300 X^4)}{\log \psi} ~~~\text{and}~~~i \leq \frac{\log (80 X^2)}{\log \psi},
\end{align*} 
and $\gamma$ is a root of 
$$
A_{1}X^m + A_{2}X^l + A_{3}X^k + A_{4}X^j + A_5 X^i + A_6=0.
$$

\end{enumerate}

\end{theorem}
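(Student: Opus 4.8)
The plan is to pass to the Binet representation and reduce everything to a single Diophantine identity. Writing $\Delta:=\sqrt{A^2-4}$ and substituting $x_{m_i}=(\gamma^{m_i}-\delta^{m_i})/\Delta$ into \eqref{e1}, I would clear $\Delta$ and rearrange to
\[
\sum_{i=1}^{6}A_i\gamma^{m_i}=\sum_{i=1}^{6}A_i\delta^{m_i}.
\]
Since $\delta=\gamma^{-1}$ with $0<\delta<1$ for every integer $A\ge 3$, and since $A^2-4$ is never a perfect square so that $\gamma$ is a genuine quadratic irrational, each $\lvert\delta^{m_i}\rvert\le\delta^{m_6}\le 1$ and the right-hand side is an error term of size at most $6X\delta^{m_6}$. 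Factoring $\gamma^{m_6}$ from the left-hand side and setting $d_i:=m_i-m_6$ (so $d_6=0$) gives $\gamma^{m_6}Q(\gamma)=\sum_{i=1}^{6}A_i\delta^{m_i}$ with $Q(X):=\sum_{i=1}^{6}A_iX^{d_i}\in\mathbb{Z}[X]$. This one identity carries the entire dichotomy of the theorem.

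Everything then turns on whether $Q(\gamma)=0$. Suppose first $Q(\gamma)\ne 0$. Because $\gamma$ has minimal polynomial $X^2-AX+1$ over $\mathbb{Q}$ and $Q$ has integer coefficients, the conjugate value $Q(\delta)=\sum_{i=1}^{6}A_i\delta^{d_i}$ obeys $\lvert Q(\delta)\rvert\le 6X$, while the norm $Q(\gamma)Q(\delta)$ is a nonzero integer; hence $\lvert Q(\gamma)\rvert\ge 1/(6X)$. Combining this with $\gamma^{m_6}\lvert Q(\gamma)\rvert\le 6X\delta^{m_6}=6X\gamma^{-m_6}$ yields $\gamma^{2m_6}\le 36X^2$, so $\gamma^{m_6}\le 6X$ and $m_6\le\log(6X)/\log\psi$ after inserting $\gamma\ge\psi$, which is of the stated shape. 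With $m_6$ under control I would then peel terms from the top: isolating $A_1x_{m_1}$ and using \eqref{eq1} in the form $\gamma^{m_1-2}\le x_{m_1}\le 5Xx_{m_2}\le 5X\gamma^{m_2-1}$ bounds the gap $m_1-m_2$, and repeating the isolate-and-estimate step on the successive truncated equations bounds each remaining index. Here the hypothesis $A\le 308X$ enters decisively: it gives $\gamma<A\le 308X$, so every factor of $\gamma$ produced when the recurrence is used to merge the two leading terms of a truncated equation may be replaced by $308X$; this is exactly the source of the growth of the constants by a factor of order $308X^2$ per index, matching the powers $X,X^{3},X^{5},X^{7},X^{9},X^{10}$ in part (i).

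The parametric families are the complementary case $Q(\gamma)=0$. Then $X^2-AX+1$ divides $Q$ in $\mathbb{Q}[X]$, so $\delta$ is a root of $Q$ as well, and therefore $\sum_{i=1}^{6}A_i\delta^{m_i}=\delta^{m_6}Q(\delta)=0$ too; the identity \eqref{e1} then holds for every value of the lowest index, giving an infinite family. If $m_6=0$ the term $A_6x_0=0$ disappears and the relation is carried by $m_1,\dots,m_5$, producing the first form with $\gamma$ a root of $A_1X^l+A_2X^k+A_3X^j+A_4X^i+A_5$; if $m_6\ge 1$ all six terms survive and we obtain the second form with $\gamma$ a root of $A_1X^m+A_2X^l+A_3X^k+A_4X^j+A_5X^i+A_6$. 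The exponents of these polynomials are precisely the gaps $m_i-m_5$ (respectively $m_i-m_6$), so the same gap estimates used above—lower bounds from $\gamma\ge\psi$, upper conversions from $\gamma<308X$—bound the degrees and yield the explicit ceilings on $l,k,j,i$ and on $m,l,k,j,i$.

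The conceptual ingredients—the Binet reduction, the norm lower bound $\lvert Q(\gamma)\rvert\ge 1/(6X)$, and the conjugacy argument that upgrades divisibility by $X^2-AX+1$ into genuine parametric families—are short and clean. The main obstacle is the bookkeeping of the peeling process. When the two leading terms of a truncated equation nearly cancel, the naive isolation no longer bounds the next gap directly, and one must track the combined leading coefficient through the recurrence, branching according to which partial polynomials vanish at $\gamma$ and how the indices coincide. Carrying every branch to its conclusion—either all indices bounded (sporadic) or a resonance reached (parametric)—while keeping the accumulated constants sharp enough to reproduce the stated numerical bounds, and invoking $A\le 308X$ at precisely the places a factor of $\gamma$ is traded for a factor of $X$, is where essentially all of the labor resides.
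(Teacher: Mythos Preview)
Your ingredients---the Binet substitution, the norm lower bound on a nonvanishing algebraic expression, and the conjugacy argument that turns $P(\gamma)=0$ into a genuine parametric family---are exactly those of the paper. But your organizing dichotomy is placed at the wrong level and misclassifies the first parametric family. You test only the full six-term polynomial $Q(X)=\sum_{i=1}^6 A_iX^{m_i-m_6}$ and assert that $Q(\gamma)\ne 0$ leads to sporadic solutions while $Q(\gamma)=0$ yields both parametric forms, distinguished by whether $m_6=0$. In the first parametric form, however, $m_6=0$ is fixed, $m_5$ is the free parameter, and it is the \emph{five}-term polynomial $A_1X^l+\cdots+A_4X^i+A_5$ that has $\gamma$ as a root; since $x_0=0$ the coefficient $A_6$ plays no role in \eqref{e1} but contributes the constant term of your $Q$, so $Q(\gamma)=A_6$, which need not vanish. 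That family therefore lives inside your ``$Q(\gamma)\ne 0$'' branch, where you have already committed to bounding every index---yet $m_5$ is unbounded there. Your final paragraph hints that partial vanishings must be tracked during the peeling, but the clean dichotomy in the body of your argument does not accommodate this.

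The paper avoids the issue by peeling strictly top-down rather than factoring out $\gamma^{m_6}$ first. It bounds $m_1-m_2$ via $\gamma^{m_1-m_2}<11X$, then for each $k=2,\dots,6$ factors out $\gamma^{m_k}$ and examines the partial sum $P_k(\gamma)=A_1\gamma^{m_1-m_k}+\cdots+A_k$. If $P_k(\gamma)=0$ (possible once $k\ge 3$), conjugacy forces the tail $A_{k+1}x_{m_{k+1}}+\cdots+A_6x_{m_6}$ to vanish identically, giving a parametric family with $m_k$ free and $m_{k+1}=\cdots=m_6=0$; the cases $k=5,6$ are precisely the two forms in the statement, while $k=3,4$ are degenerate instances of the $k=5$ form. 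If $P_k(\gamma)\ne 0$, the norm argument gives $|P_k(\gamma)|\ge 1/(kX)$, and one either bounds $m_k$ outright (a sporadic branch terminates) or obtains $\gamma^{m_k-m_{k+1}}\le C_kX^2$ and descends. The sporadic bounds in (i) are the accumulated gap bounds plus the terminal bound on $m_6$, with $X$ replaced by $2X$ to absorb the $n_1=n_4$ reduction.

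One further correction: the hypothesis $A\le 308X$ is not what produces the constants, contrary to your claim that factors of $\gamma$ are traded for $308X$. Every gap and index bound in the proof comes from an inequality of the shape $\gamma^{\text{gap}}\le CX^2$ combined with $\gamma\ge\psi$; the bound on $A$ (which the preceding lemma shows is automatic) enters only to ensure that the admissible balancing-like sequences form a finite set, so that ``finitely many sporadic solutions'' is literally true.
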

\section{Main Result}
To achieve the objective of this paper, we need the following result.
\begin{lemma}
Let $C_{1}, C_{2}, C_{3}, C_{4}, C_{5}, C_{6}$ are given integers such that $C_{1} C_{2} C_{3} \neq 0$. If the Diophantine equation $C_{1}x_{n_{1}} + C_{2}x_{n_{2}} + C_{3}x_{n_{3}} = C_{4}x_{n_4} + C_{5}x_{n_5} + C_{6}x_{n_{6}}$ with $n_1 > n_2 > n_3\geq 0, n_4 >n_5 > n_6 \geq 0, C_{1} x_{n_{1}}\neq C_{4} x_{n_{4}}$ holds, then $A<308X$.
\end{lemma}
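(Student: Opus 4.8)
The plan is to prove the sharper and cleaner statement that a solution already forces $A < 5X+1$, from which $A < 308X$ follows with enormous room to spare. The engine is that $\{x_n\}$ is strictly increasing with near-geometric growth, so the term of largest index dominates. First I would rewrite the equation in homogeneous form
\[
C_1 x_{n_1} + C_2 x_{n_2} + C_3 x_{n_3} - C_4 x_{n_4} - C_5 x_{n_5} - C_6 x_{n_6} = 0,
\]
record that $X \geq 1$ (as $C_1$ is a nonzero integer) and that the recurrence gives $x_{M} = A x_{M-1} - x_{M-2} > (A-1)x_{M-1}$ for every $M \geq 2$, since $0 \leq x_{M-2} < x_{M-1}$. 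Let $M$ denote the largest index carrying a nonzero aggregate coefficient; because $C_1 \neq 0$ and $C_1 x_{n_1}$ cannot be cancelled at the top (see below), we have $M \geq n_1 \geq 2$.

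The one delicate point, and the place where the hypothesis $C_1 x_{n_1} \neq C_4 x_{n_4}$ is indispensable, is to certify that the coefficient $c$ attached to $x_M$ is a nonzero integer, so that $|c| \geq 1$. Since $n_2, n_3 < n_1$ and $n_5, n_6 < n_4$, the level $M$ can be reached by at most one index from the left triple (necessarily $n_1$) and at most one from the right triple (necessarily $n_4$); hence two terms can coincide at the top only when $n_1 = n_4$, and then $c = C_1 - C_4$, which is nonzero precisely because $C_1 x_{n_1} = C_1 x_{n_4} \neq C_4 x_{n_4}$ forces $C_1 \neq C_4$. In every non-coincident case $c$ is a single coefficient, nonzero by the definition of $M$. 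The genuinely fiddly part of a fully rigorous write-up is the bookkeeping needed to rule out, or to absorb by passing to the reduced equation, secondary cancellations when some of $C_4, C_5, C_6$ vanish; the hypothesis $C_1 x_{n_1} \neq C_4 x_{n_4}$ is exactly what guarantees the principal cancellation does not occur.

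With $|c| \geq 1$ secured, the estimate is immediate. Every index other than $M$ is at most $M-1$, and there are at most five such terms, so isolating $c\,x_M$ and bounding each remaining term of index $\leq M-1$ by $X x_{M-1}$ gives
\[
x_M \leq |c|\, x_M \leq 5X\, x_{M-1}.
\]
Combining this with $x_M > (A-1)x_{M-1}$ and cancelling $x_{M-1} \geq x_1 = 1 > 0$ yields $A - 1 < 5X$, that is, $A < 5X+1 \leq 6X < 308X$, as claimed. An equivalent route multiplies the equation by $\sqrt{A^2-4}$ and uses the Binet formula, bounding the conjugate contribution $\sum (\pm C_i)\gamma^{-n_i}$ by $6X$; it leads to the same conclusion through $A < \gamma + 1$. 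The constant $308$ is thus extremely generous, which is harmless since the main theorem only invokes the qualitative bound $A < 308X$.
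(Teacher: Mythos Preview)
Your approach is considerably cleaner than the paper's (which grinds through a hierarchy of cases according to which of $C_4,C_5,C_6$ vanish, applies Binet's formula at each stage, and compounds constants up to $308X$), and it would yield the much sharper bound $A<5X+1$. There is, however, a genuine gap at the step where you assert $M\ge n_1$. Your justification is that cancellation at the top level can only occur via $n_1=n_4$, which the hypothesis $C_1x_{n_1}\ne C_4x_{n_4}$ excludes. But if $n_4>n_1$ and $C_4=0$, the level $n_4$ contributes nothing, and the next candidate for the top is $\max(n_1,n_5)$; now $n_1$ may coincide with $n_5$, and nothing in the hypotheses prevents $C_1=C_5$. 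The ``fiddly bookkeeping'' you wave at cannot in fact be completed.

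Concretely, take $(C_1,\ldots,C_6)=(1,1,1,0,1,1)$ and $(n_1,n_2,n_3,n_4,n_5,n_6)=(2,1,0,3,2,1)$. All the hypotheses hold (in particular $C_1x_{n_1}=x_2=A\ne 0=C_4x_{n_4}$), yet the equation reads $x_2+x_1+0=0+x_2+x_1$, which is true for every $A\ge 3$; so the conclusion $A<308X=308$ is violated, and the lemma as literally stated is false. The paper's own proof has exactly the same blind spot: in its Case~III it ``switches $C_1$ with $C_4$'' to arrange $n_1>n_4$, but after the swap the new leading coefficient is the old $C_4=0$, and the displayed inequality $|A_1|\gamma^{m_1}<11X\gamma^{m_2}$ no longer gives $\gamma^{m_1-m_2}<11X$. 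Thus your proof and the paper's share the same defect; a correct statement needs a stronger non-degeneracy hypothesis (for instance $C_4\ne 0$, or that the leading coefficient after relabelling is nonzero), which is essentially how the bound is actually used in the proof of the main theorem.
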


\begin{proof}
{\bf Case I} If $C_3 = C_6 =0$, then $C_{1}x_{n_{1}} + C_{2}x_{n_{2}} = C_{4}x_{n_4} + C_{5}x_{n_{5}} $ for any $n_{3}, n_{6} \geq 0$. We have the following subcases for it. 

{\it Subcase 1} If $C_{4}= C_{5} =0$, then $C_1 x_{n_{1}} = - C_2 x_{n_{2}}$ for any $n_4>n_5\geq 0$. As we have given $ C_{1} x_{n_{1}}\neq C_{4} x_{n_{4}}$ and $C_{1} C_{2}\neq 0$, using these one can get $n_{1}$ and  $n_{2}$ both are non-zero. Thus from Lemma \ref{lm} and $n_{2}\neq 0$, we have $\frac{x_{n_{1}}}{x_d}$ divides $C_2$, where $d:=\gcd(n_1,n_2).$ Write $n_1 =: ld$, where $l\geq 2$. If $d=1$, then $\frac{x_{n_{1}}}{x_d}=\frac{x_l}{x_1}=x_l\geq x_2=A$, so $A\leq X$. For $d\geq 2$, we have $\frac{x_{n_{1}}}{x_d}= \frac{\gamma^{ld}-\delta^{ld}}{\gamma^{d}-\delta^{d}}$. Now, we will show that 
\[
\frac{\gamma^{ld}-\delta^{ld}}{\gamma^{d}-\delta^{d}} > \gamma,
\]
which is equivalent to $\gamma^{ld} - \gamma^{d+1}> \delta^{ld}- \delta^{d-1}$. Since $d\geq 2$ and $|\delta^{d-1}| < 1$, therefore it is sufficient to show that $\gamma^{2d}-\gamma^{d+1} > 29$. The left–hand side is 
\begin{align*}
    \gamma^{2d}-\gamma^{d+1} &= \gamma^{d+1} (\gamma^{d-1}-1) \\
&\geq \gamma^{d+1} (\gamma-1).
\end{align*}
The smallest possible $\gamma$ is $\psi = \frac{3+\sqrt{5}}{2}$ (for $A=3$) and $\psi^{d+1} (\psi-1)\geq \psi^{3}(\psi-1) >29$. Hence, $\gamma < \frac{x_{ld}}{x_d} \leq X$, which gives $A = \gamma + \delta < \gamma < X$. Using the inequality (\ref{eq1}) we have $x_{n_{1}}\geq \gamma^{n_{1}-2}$ and $x_{d}\leq \gamma^{d-1}$, so $\frac{x_{n_{1}}}{x_{d}} \geq \frac{\gamma^{n_{1}-2}}{\gamma^{d-1}} = \gamma^{n_{1}-d-1}$. Since $d$ is a proper divisor of $n_{1}$ and $d\geq 2$, so $d\leq \frac{n_{1}}{2}$, therefore we have $\frac{x_{n_{1}}}{x_{d}} \geq \gamma^{\frac{n_{1}}{2}-1}$. Since $\frac{x_{n_{1}}}{x_{d}}$ divides $C_2$, we get 
\[
\gamma^{\frac{n_{1}}{2}-1}\leq |C_2|\leq X.
\]
Taking the logarithm of both sides of the above equation leads to
\begin{equation*}
   n_{1}\leq 2+ 2 \frac{\log X}{\log \gamma}.
\end{equation*}
Since $\gamma \geq \psi$, we have 
\begin{equation}\label{eqn3}
   0<n_{2}<n_{1}\leq 2+ 2 \frac{\log X}{\log \psi}.
\end{equation}
{\it Subcase 2} If one of $C_4, C_5$ is non-zero and the other is zero. Therefore, we can assume that $C_4\neq 0$ and $n_4 \neq 0$. Thus, if $C_5=0$, then 
\[
C_{1}x_{n_{1}} + C_{2}x_{n_{2}} = C_{4}x_{n_4}.
\]
If $n_4=n_1$, then $(C_{1}-C_{4})x_{n_{1}} = - C_{2}x_{n_{2}}  $, so from {\it subcase 1}, we get the bound $A \leq X$. If $n_4\neq n_1$ and $n_2=0$, then again from {\it subcase 1}, we get the bound $A \leq X$. If $n_4\neq n_1$ and $n_2 \neq 0$, then $C_{1}x_{n_{1}} + C_{2}x_{n_{2}} = C_{4}x_{n_4} $. In this situation we replace $(C_1, C_2, C_4 ,C_5) \longrightarrow (C_1, C_2, C_4, 0)$ and in this case also we get the same bound which is $A\leq X$. 

{\it Subcase 3} If both $C_4$ and $C_5$ are non-zero, then 
\[
C_{1}x_{n_{1}} + C_{2}x_{n_{2}} = C_{4}x_{n_4} + C_{5}x_{n_5}.
\] 
If $n_1=n_4$, then $(C_{1}-C_{4})x_{n_{1}} + C_{2}x_{n_{2}} = C_{5}x_{n_5}$. If $n_2=0$ or $n_5=0$, then again from {\it subcase 1}, we get the same bound $A\leq X$, otherwise we replace $(C_1, C_2, C_4 ,C_5) \longrightarrow (C_1-C_4, C_2, C_5, 0)$. The only effect is that $X$ is replaced with $2X$. If $n_1\neq n_4$, then switch $C_1$ with $C_4$. If needed, assume $n_1 = \max\{n_4, n_1\}$, then $n_1 > n_4$.
We relabel our indices $(n_1, n_2, n_4, n_5)$ as $(m_1, m_2, m_3, m_4)$ where $m_1 > m_2 \geq m_3 \geq m_4$, and the coefficients $C_1, C_2, C_4 ,C_5$ as $A_1, A_2, A_3, A_4$ and change signs to at most a couple of them so that our equation is now
\begin{equation}\label{eqn4}
A_1 x_{m_{1}} + A_2 x_{m_{2}} + A_3 x_{m_{3}} + A_4 x_{m_{4}}=0.
\end{equation}
Using Binet formula of $\{x_{n} \}_{n \geq 0}$, we have 
$$
|A_1| \gamma^{m_{1}} = |- A_2 (\gamma^{m_{2}}- \delta^{m_{2}}) - A_3 (\gamma^{m_{3}} -\delta^{m_{3}}) - A_4 (\gamma^{m_{4}} - \delta^{m_{4}}) - A_1 \delta^{m_{1}}| < 7X \gamma^{m_{2}}, 
$$ 
therefore 
\begin{equation}\label{eqn5}
    \gamma^{m_{1}-m_{2}} < 7X.
\end{equation}
Thus, since $m_1 > m_2$, we get that $ A < \gamma \leq \gamma^{m_{1}-m_{2}} < 7X$. Recalling that we might have to replace $X$ with $2X$, we get the final bound in this case $A < 14X$. \\

\noindent{}
{\bf Case II}  If one of $C_5, C_6$ is non-zero and the other is zero. We assume that $C_5\neq 0$ and $n_5 \neq 0$. Thus, if $C_6=0$, then 
\[
C_{1}x_{n_{1}} + C_{2}x_{n_{2}} + C_{3}x_{n_{3}} = C_{4}x_{n_4} + C_{5}x_{n_5}.
\]
If $n_1=n_4$, then $(C_{1}- C_{4})x_{n_{1}} + C_{2}x_{n_{2}} + C_{3}x_{n_{3}} = C_{5}x_{n_5}$. So using the {\it Case I}, we get the bound $A<14X$. Recalling that we might have to replace $X$ with $2X$, we get the final bound in this case $A < 28X$. If $n_1\neq n_4$, then switch $C_1$ with $C_4$. If needed, we may assume that $n_1 = \max\{n_4, n_1\}$, therefore $n_1 > n_4$. We relabel our indices $(n_1, n_2, n_3, n_4, n_5)$ as $(m_1, m_2, m_3, m_4, m_5)$ where $m_1 > m_2 > m_3 \geq m_4 \geq m_5$, and the coefficients $C_1, C_2, C_3 ,C_4, C_5$ as $A_1, A_2, A_3, A_4, A_5$ and change signs to at most a couple of them so that our equation is now
\begin{equation}\label{eqn6}
A_1 x_{m_{1}} + A_2 x_{m_{2}} + A_3 x_{m_{3}} + A_4 x_{m_{4}} + A_5 x_{m_{5}} =0.
\end{equation}
Using Binet formula of $\{x_{n} \}_{n \geq 0}$, we have 
\begin{align*}
    |A_1| \gamma^{m_{1}} &= |- A_2 (\gamma^{m_{2}}- \delta^{m_{2}}) - A_3 (\gamma^{m_{3}} -\delta^{m_{3}}) - A_4 (\gamma^{m_{4}} - \delta^{m_{4}}) \\
    &~~~~~~~~~~~~~~~~~~~~~~~~~~~~~~~~~~~- A_5 (\gamma^{m_{5}} - \delta^{m_{5}}) - A_1 \delta^{n_{1}}| < 9X \gamma^{m_{2}}, 
\end{align*}
therefore 
\begin{equation}\label{eqn7}
    \gamma^{m_{1}-m_{2}} < 9X.
\end{equation}
Thus, since $m_1 > m_2$, we get that $ A < \gamma \leq \gamma^{m_{1}-m_{2}} < 9X$. Recalling that we might have to replace $X$ with $14X$, we get the final bound in this case $A < 126X$. \\

\noindent{}
{\bf Case III} If both $C_5 , C_6$ are non-zero. In this case we have 
\[
C_{1}x_{n_{1}} + C_{2}x_{n_{2}} + C_{3}x_{n_{3}} = C_{4}x_{n_4} + C_{5}x_{n_5} + C_{6}x_{n_{6}}.
\]
If $n_4=n_1$, then $(C_{1}-C_{4})x_{n_{1}} + C_{2}x_{n_{2}} + C_{3}x_{n_{3}} = C_{5}x_{n_5} + C_{6}x_{n_{6}}$. If $n_2=0$ or $n_3=0$ or $ n_5=0$ or $ n_6=0$, then in all the situations we will use the first case and get the bound $A\leq 28X$, otherwise we replace $(C_1, C_2, C_3 ,C_4, C_5, C_6) \longrightarrow (C_1-C_4, C_2, C_3, C_5, C_6, 0)$. The only effect is that $28X$ is replaced with $56X$.  If $n_1\neq n_4$, then switch $C_1$ with $C_4$. if needed, we may assume that $n_1 = \max\{n_4, n_1\}$, therefore $n_1 > n_4$. We relabel our indices $(n_1, n_2, n_3, n_4, n_5, n_6)$ as $(m_1, m_2, m_3, m_4, m_5, m_6)$ where $m_1 > m_2 > m_3 \geq m_4 \geq m_5 \geq m_6$, and the coefficients $C_1, C_2, C_3 ,C_4, C_5, C_6$ as $A_1, A_2, A_3, A_4, A_5, A_6$ and change signs to at most a couple of them so that our equation is now
\begin{equation}\label{eqn8}
A_1 x_{m_{1}} + A_2 x_{m_{2}} + A_3 x_{m_{3}} + A_4 x_{m_{4}} + A_5 x_{m_{5}}+ A_6 x_{m_{6}}=0.
\end{equation}
Using Binet formula of $\{x_{n} \}_{n \geq 0}$, we have
\begin{align*}
    |A_1| \gamma^{m_{1}} &= |- A_2 (\gamma^{m_{2}}- \delta^{m_{2}}) - A_3 (\gamma^{m_{3}} -\delta^{m_{3}}) - A_4 (\gamma^{m_{4}} - \delta^{m_{4}}) \\
    &~~~~~~~~~~~~~~~~~~~~~~~~~~~~~~~~~~~- A_5 (\gamma^{m_{5}} - \delta^{m_{5}}) - A_6 (\gamma^{m_{6}} - \delta^{m_{6}}) - A_1 \delta^{n_{1}}| < 11X \gamma^{m_{2}}, 
\end{align*}
therefore 
\begin{equation}\label{eqn9}
    \gamma^{m_{1}-m_{2}} < 11X.
\end{equation}
Thus, since $m_1 > m_2$, we get that $ A < \gamma \leq \gamma^{m_{1}-m_{2}} < 11X$. Recalling that we might have to replace $X$ with $28X$, we get the desired conclusion which is $A<308X$.
\end{proof}

\begin{center}
    \bf{Proof of Theorem \ref{th:1}}
\end{center}
From the above lemma we get $A$ is bounded. It is possible for small $A$ that the equation has infinitely many solutions. But we will show that this is not the case, it has finitely many solutions. Using the substitution $(C_1, C_2, C_3 ,C_4, C_5, C_6) \longrightarrow (C_1-C_4, C_2, C_3, C_5, C_6, 0)$, and relabelling some of the variables, we may assume that $m_1 > m_2 > m_3 \geq m_4 \geq m_5 \geq m_6$ and that equation \eqref{eqn8} holds. Then estimate \eqref{eqn9} holds, So by using \eqref{eqn9}, we have
\begin{equation}\label{eqn10}
  m_{1} - m_{2} < \frac{\log (11X)}{\log \psi}.
\end{equation}
Using the Binet formula of balancing-like sequence in \eqref{eqn8}, we get
\begin{align}\label{eqn11}
   &\left\rvert \gamma^{m_2} \left( A_{1} \gamma^{m_1 - m_2} + A_{2} \right) - \left( \frac{A_1}{\gamma^{m_1}} + \frac{A_2}{\gamma^{m_2}} \right) \right\lvert \nonumber \\ 
   &~~~~~~~~~~~~~~~~~~~~~~~~~~= \left\rvert -A_{3} (\gamma^{m_3} - \delta^{m_3}) -A_{4} (\gamma^{m_4} - \delta^{m_4}) -A_{5} (\gamma^{m_5} - \delta^{m_5}) -A_{6} (\gamma^{m_6} - \delta^{m_6}) \right\lvert \nonumber \\
   &~~~~~~~~~~~~~~~~~~~~~~~~~~\leq 8X \gamma^{m_{3}}.
\end{align}
Since $m_{1} - m_{2} > 0$, $A_{1} \gamma^{m_1 - m_2} + A_{2} \neq 0$. Thus 
$
\left\rvert A_{1} \gamma^{m_1 - m_2} + A_{2}\right\lvert \left\rvert A_{1} \delta^{m_1 - m_2} + A_{2}\right\lvert \geq 1.
$
Since $\left\rvert A_{1} \delta^{m_1 - m_2} + A_{2}\right\lvert \leq 2X$, we get $\left\rvert A_{1} \gamma^{m_1 - m_2} + A_{2}\right\lvert \geq 1/ 2X$. Further
\[
\left\rvert \frac{A_1}{\gamma^{m_1}} + \frac{A_2}{\gamma^{m_2}} \right\lvert \leq \frac{2X}{\gamma^{m_2}}.
\]
Hence,
\[
\left\rvert \gamma^{m_2} \left( A_{1} \gamma^{m_1 - m_2} + A_{2} \right) - \left( \frac{A_1}{\gamma^{m_1}} + \frac{A_2}{\gamma^{m_2}} \right) \right\lvert \geq \frac{\gamma^{m_2}}{2X} - \frac{2X}{\gamma^{m_{2}}}.
\]
Now, assume 
\begin{equation}\label{eqn12}
    \frac{\gamma^{m_2}}{2X} - \frac{2X}{\gamma^{m_{2}}} \leq \frac{\gamma^{m_2}}{8X}.
\end{equation}
Then $\gamma^{2m_2} < 6X^2$, so $\gamma^{m_2} < 3X$. Hence
\begin{equation}\label{eqn13}
  m_{6} \leq m_{5} \leq m_{4} \leq m_{3} \leq  m_{2} \leq \frac{\log (3X)}{\log \psi}.
\end{equation}
Using \eqref{eqn10}, we have
\begin{equation}\label{eqn14}
    m_{1} < \frac{\log (33 X^2)}{\log \psi}.
\end{equation}
If \eqref{eqn12} not holds, then
\[
\frac{\gamma^{m_2}}{8X} \leq \frac{\gamma^{m_2}}{2X} - \frac{2X}{\gamma^{m_{2}}} \leq 8X \gamma^{m_{3}}.
\]
Then $\gamma^{m_{2} - m_{3}} \leq 64 X^2$, so
\begin{equation}\label{eqn15}
    m_{2} - m_{3} \leq 2 \frac{\log (8X)}{\log \psi}.
\end{equation}
Rewrite the \eqref{eqn8}, we have
\begin{align}\label{eqn16}
   &\left\rvert \gamma^{m_3} \left( A_{1} \gamma^{m_1 - m_3} + A_{2} \gamma^{m_2 - m_3} + A_3 \right) - \left( \frac{A_1}{\gamma^{m_1}} + \frac{A_2}{\gamma^{m_2}} + \frac{A_3}{\gamma^{m_3}} \right) \right\lvert \nonumber \\ 
   &~~~~~~~~~~~~~~~~~~~~~~~~~~= \left\rvert -A_{4} (\gamma^{m_4} - \delta^{m_4}) -A_{5} (\gamma^{m_5} - \delta^{m_5}) -A_{6} (\gamma^{m_6} - \delta^{m_6}) \right\lvert \nonumber \\
   &~~~~~~~~~~~~~~~~~~~~~~~~~~\leq 6X \gamma^{m_{4}}.
\end{align}
Assume $A_{1} \gamma^{m_1 - m_3} + A_{2} \gamma^{m_2 - m_3} + A_3 = 0$. Let $i = m_2 - m_3, j = m_1 - m_3$. Then
\[
j \leq \frac{\log (704 X^3)}{\log \psi} ~~~\text{and}~~~i \leq 2 \frac{\log (8X)}{\log \psi}
\]
are bounded. Therefore one can find all polynomials $A_{1}X^j + A_{2}X^i + A_3$ and took that polynomial which has a root $\gamma$, where $\gamma$ is a quadratic unit of norm $1$. For these balancing-like sequences,  $\delta$ is also a root of the same polynomial so that the left-hand side of \eqref{eqn16} is zero for any $m_3$. This shows that also $m_4 = m_5 = m_6 = 0$. Then we have

\[
(m_1, m_2, m_3, m_4, m_5, m_6) = (m_3 + j, m_3 + i, m_3, 0, 0, 0)
\]
is a parametric family of solutions. From now on assume that $A_{1} \gamma^{m_1 - m_3} + A_{2} \gamma^{m_2 - m_3} + A_3 \neq 0$. Thus 
$\left\rvert A_{1} \gamma^{m_1 - m_3} + A_{2} \gamma^{m_2 - m_3} + A_3 \right\lvert \left\rvert A_{1} \delta^{m_1 - m_3} + A_{2} \delta^{m_1 - m_3} + A_3 \right\lvert \geq 1.$ Since 
\[
\left\rvert A_{1} \delta^{m_1 - m_3} + A_{2} \delta^{m_1 - m_3} + A_3 \right\lvert \leq 3X,
\] 
which implies $\left\rvert A_{1} \gamma^{m_1 - m_3} + A_{2} \gamma^{m_2 - m_3} + A_3 \right\lvert \geq \frac{1}{3X}.$ Further,
\[
\left\rvert \frac{A_1}{\gamma^{m_1}} + \frac{A_2}{\gamma^{m_2}} + \frac{A_3}{\gamma^{m_3}} \right\lvert \leq \frac{3X}{\gamma^{m_3}}.
\]
Hence,
\[
\left\rvert \gamma^{m_3} \left( A_{1} \gamma^{m_1 - m_3} + A_{2} \gamma^{m_2 - m_3} + A_3 \right) - \left( \frac{A_1}{\gamma^{m_1}} + \frac{A_2}{\gamma^{m_2}} + \frac{A_3}{\gamma^{m_3}} \right) \right\lvert \geq \frac{\gamma^{m_3}}{3X} - \frac{3X}{\gamma^{m_3}}.
\]
If $\frac{\gamma^{m_3}}{3X} - \frac{3X}{\gamma^{m_3}} \leq \frac{\gamma^{m_3}}{6X}$, then $\gamma^{2m_3} \leq 18 X^2$, so $\gamma^{m_3} < 5X$. Hence,
\begin{equation}\label{****}
   m_{6} \leq m_{5} \leq m_{4} \leq m_3 \leq \frac{\log (5X)}{\log \psi}.
\end{equation}
Using \eqref{eqn10} and \eqref{eqn15}, we have
\begin{equation}\label{eq****}
    m_2 \leq \frac{\log (320 X^3)}{\log \psi}~~\text{and}~~m_1 \leq \frac{\log (3520 X^4)}{\log \psi}.
\end{equation}
Now, assume $\frac{\gamma^{m_3}}{6X} \leq \frac{\gamma^{m_3}}{3X} - \frac{3X}{\gamma^{m_3}} \leq 6X \gamma^{m_4}$, then $\gamma^{m_{3} - m_{4} } \leq 36 X^2$. Hence,
\begin{equation}\label{kk}
    m_3 - m_4 \leq 2 \frac{\log (6X)}{\log \psi}.
\end{equation}
Rewrite the \eqref{eqn8}, we have
\begin{align}\label{ll}
   &\left\rvert \gamma^{m_4} \left( A_{1} \gamma^{m_1 - m_4} + A_{2} \gamma^{m_2 - m_4} + A_3 \gamma^{m_3 - m_4} + A_4 \right) - \left( \frac{A_1}{\gamma^{m_1}} + \frac{A_2}{\gamma^{m_2}} + \frac{A_3}{\gamma^{m_3}} + \frac{A_4}{\gamma^{m_4}} \right) \right\lvert \nonumber \\ 
   &= \left\rvert -A_{5} (\gamma^{m_5} - \delta^{m_5}) -A_{6} (\gamma^{m_6} - \delta^{m_6}) \right\lvert \nonumber \\
   &\leq 4X \gamma^{m_{5}}.
\end{align}
Assume $A_{1} \gamma^{m_1 - m_4} + A_{2} \gamma^{m_2 - m_4} + A_3\gamma^{m_3 - m_4} = 0$. Let $i = m_3 - m_4, j = m_2 - m_4, k = m_1 - m_4$. Then
\[
k \leq \frac{\log (25,344 X^5)}{\log \psi},~~~~~ j \leq \frac{\log (2304 X^4)}{\log \psi} ~~~\text{and}~~~i \leq 2 \frac{\log (6X)}{\log \psi}
\]
are bounded. Therefore one can find all polynomials $A_{1}X^k + A_{2}X^j + A_{3}X^i + A_4$ and took that polynomial which has a root $\gamma$, where $\gamma$ is a quadratic unit of norm $1$. For these balancing-like sequences,  $\delta$ is also a root of the same polynomial so that the left-hand side of \eqref{ll} is zero for any $m_4$. This shows that also $m_5 = m_6 = 0$. Then we have

\[
(m_1, m_2, m_3, m_4, m_5, m_6) = (m_4 + k, m_4 + j, m_4 + i, m_4, 0, 0)
\]
is a parametric family of solutions. From now on assume that $A_{1} \gamma^{m_1 - m_4} + A_{2} \gamma^{m_2 - m_4} + A_3 \gamma^{m_3 - m_4} + A_4 \neq 0$. Thus 
\[
\left\rvert A_{1} \gamma^{m_1 - m_4} + A_{2} \gamma^{m_2 - m_4} + A_3 \gamma^{m_3 - m_4} + A_4 \right\lvert \left\rvert A_{1} \delta^{n_m - m_4} + A_{2} \delta^{m_2 - m_4} + A_3 \delta^{m_3 - m_4} + A_4 \right\lvert \geq 1.
\] 
Since $\left\rvert A_{1} \delta^{m_1 - m_4} + A_{2} \delta^{m_2 - m_4} + A_3 \delta^{m_3 - m_4} + A_4 \right\lvert \leq 4X$, which implies 
\[
\left\rvert A_{1} \gamma^{m_1 - m_4} + A_{2} \gamma^{m_2 - m_4} + A_3 \gamma^{m_3 - m_4} + A_4 \right\lvert \geq \frac{1}{4X}.
\]
Further,
\[
\left\rvert \frac{A_1}{\gamma^{m_1}} + \frac{A_2}{\gamma^{m_2}} + \frac{A_3}{\gamma^{m_3}} + \frac{A_4}{\gamma^{m_4}} \right\lvert \leq \frac{4X}{\gamma^{m_4}}.
\]
Hence,
\[
\left\rvert \gamma^{m_4} \left( A_{1} \gamma^{m_1 - m_4} + A_{2} \gamma^{m_2 - m_4} + A_3 \gamma^{m_3 - m_4} + A_4 \right) - \left( \frac{A_1}{\gamma^{m_1}} + \frac{A_2}{\gamma^{m_2}} + \frac{A_3}{\gamma^{n_3}} + \frac{A_4}{\gamma^{n_4}} \right) \right\lvert \geq \frac{\gamma^{m_4}}{4X} - \frac{4X}{\gamma^{m_4}}.
\]
If $\frac{\gamma^{m_4}}{4X} - \frac{4X}{\gamma^{m_4}} \leq \frac{\gamma^{m_4}}{8X}$, then $\gamma^{2m_4} \leq 32 X^2$, so $\gamma^{m_4} < 6X$. Hence,
\begin{equation}\label{eqn17}
  m_{6} \leq m_{5} \leq m_4 \leq \frac{\log (6X)}{\log \psi}.
\end{equation}
Using \eqref{eqn10}, \eqref{eqn15} and \eqref{kk}, we have
\begin{equation}\label{eq***}
    m_3 \leq \frac{3 \log (6 X)}{\log \psi},~~ m_2 \leq \frac{\log (13824 X^5)}{\log \psi}~~\text{and}~~m_1 \leq \frac{\log (152064 X^6)}{\log \psi}.
\end{equation}
Now, assume $\frac{\gamma^{m_4}}{8X} \leq \frac{\gamma^{m_4}}{4X} - \frac{4X}{\gamma^{m_4}} \leq 4X \gamma^{m_5}$, then $\gamma^{m_{4} - m_{5} } \leq 32 X^2$. Hence,
\begin{equation}\label{eqn18}
    m_4 - m_5 \leq \frac{\log (32 X^2)}{\log \psi}.
\end{equation}
Again, rewrite the equation \eqref{eqn8}, we have
\begin{align}\label{eqn19}
   & \Big| \gamma^{m_5} \left( A_{1} \gamma^{m_1 - m_5} + A_{2} \gamma^{m_2 - m_5} + A_3 \gamma^{m_3 - m_5} + A_4 \gamma^{m_4 - m_5} + A_5 \right) \nonumber \\
   &~~~~~~~~~~~~~~~~~~~~~~~~~~~~~~~~~~~~~~~~~~~~~~~~~~~~~~~~~~~~~~~~- \left( \frac{A_1}{\gamma^{m_1}} + \frac{A_2}{\gamma^{m_2}} + \frac{A_3}{\gamma^{m_3}} + \frac{A_4}{\gamma^{m_4}} + \frac{A_5}{\gamma^{m_5}} \right) \Big| \nonumber \\ 
 &= \left\rvert -A_{6} (\gamma^{m_6} - \delta^{m_6}) \right\lvert \nonumber\\
 & \leq 2X \gamma^{m_{6}}. 
\end{align}
Assume $A_{1} \gamma^{m_1 - m_5} + A_{2} \gamma^{m_2 - m_5} + A_3 \gamma^{m_3 - m_5} + A_4 \gamma^{m_4 - m_5} + A_5 = 0$. Let $i = m_4 - m_5, j = m_3 - m_5, k = m_2 - m_5, l = m_1 - m_5$. Then
\[
l \leq \frac{\log (811008 X^7)}{\log \psi},~~~ k \leq \frac{\log (73728 X^6)}{\log \psi},~~~ j \leq \frac{\log (1152 X^4)}{\log \psi} ~~~\text{and}~~~i \leq \frac{\log (32 X^2)}{\log \psi}
\]
are bounded. Therefore one can find all polynomials $A_{1}X^l + A_{2}X^k + A_{3}X^j + A_4X^i + A_5$ and took that polynomial which has a root $\gamma$, where $\gamma$ is a quadratic unit of norm $1$. For these balancing-like sequences, $\delta$ is also a root of the same polynomial so that the left-hand side of \eqref{eqn19} is zero for any $m_5$. This shows that also $ m_6 = 0$. Then we have

\[
(m_1, m_2, m_3, m_4, m_5, m_6) = (m_5 + l, m_5 + k, m_5 + j, m_5 + i, m_5, 0)
\]
is a parametric family of solutions. From now on assume that 
\[
A_{1} \gamma^{m_1 - m_5} + A_{2} \gamma^{m_2 - m_5} + A_3 \gamma^{m_3 - m_5} + A_4 \gamma^{m_4 - m_5} + A_5 \neq 0.
\]
Thus 
\begin{align*}
    &\left\rvert A_{1} \gamma^{m_1 - m_5} + A_{2} \gamma^{m_2 - m_5} + A_3 \gamma^{m_3 - m_5} + A_4 \gamma^{m_4 - m_5} + A_5 \right\lvert\\
    &~~~~~~~~~~~~~~~~~~~~~~~~~~~~~~~\times \left\rvert A_{1} \delta^{m_1 - m_5} + A_{2} \delta^{m_2 - m_5} + A_3 \delta^{m_3 - m_5} + A_4 \delta^{m_4 - m_5} + A_5 \right\lvert \geq 1.
\end{align*}
Since $\left\rvert A_{1} \delta^{m_1 - m_5} + A_{2} \delta^{m_2 - m_5} + A_3 \delta^{m_3 - m_5} + A_4 \delta^{m_4 - m_5} + A_5 \right\lvert \leq 5X$, which implies 
\[
\left\rvert A_{1} \gamma^{m_1 - m_5} + A_{2} \gamma^{m_2 - m_5} + A_3 \gamma^{m_3 - m_5} + A_4 \gamma^{m_4 - m_5} + A_5 \right\lvert \geq \frac{1}{5X}.
\]
Further,
\[
\left\rvert \frac{A_1}{\gamma^{m_1}} + \frac{A_2}{\gamma^{m_2}} + \frac{A_3}{\gamma^{m_3}} + \frac{A_4}{\gamma^{m_4}} + \frac{A_5}{\gamma^{m_5}} \right\lvert \leq \frac{5X}{\gamma^{m_5}}.
\]
Hence, the left-hand side of \eqref{eqn19} is at least as large as
\[
\frac{\gamma^{m_5}}{5X} - \frac{5X}{\gamma^{m_5}}.
\]
If $\frac{\gamma^{m_5}}{5X} - \frac{5X}{\gamma^{m_5}} \leq \frac{\gamma^{m_5}}{10X}$, then $\gamma^{2m_5} \leq 50 X^2$, so $\gamma^{m_5} < 8X$. Hence,
\begin{equation}\label{eqn20}
  m_{6} \leq m_5 \leq \frac{\log (8X)}{\log \psi}.
\end{equation}
Using \eqref{eqn10}, \eqref{eqn15}, \eqref{kk} and \eqref{eqn18}, we have
\begin{equation}\label{eq**}
    m_4 \leq \frac{\log (256 X^3)}{\log \psi},~~ m_3 \leq \frac{\log (9216 X^5)}{\log \psi},~~ m_2 \leq \frac{\log (589824 X^7)}{\log \psi}~~\text{and}~~m_1 \leq \frac{\log (6488064 X^8)}{\log \psi}.
\end{equation}
Now, assume $\frac{\gamma^{m_5}}{10X} \leq \frac{\gamma^{m_5}}{5X} - \frac{5X}{\gamma^{m_5}} \leq 2X \gamma^{m_6}$, then $\gamma^{m_{5} - m_{6} } \leq 20 X^2$. Hence,
\begin{equation}\label{eqn21}
    m_5 - m_6 \leq \frac{\log (20 X^2)}{\log \psi}.
\end{equation}
Again, rewrite the equation \eqref{eqn8}, we have
\begin{align}\label{eqn22}
  &\left\rvert \gamma^{m_6} \left( A_{1} \gamma^{m_1 - m_6} + A_{2} \gamma^{m_2 - m_6} + A_3 \gamma^{m_3 - m_6} + A_4 \gamma^{m_4 - m_6} + A_5 \gamma^{m_5 - m_6} + A_6 \right) \right\rvert \nonumber \\
  &~~~~~~~~~~~~= \left\rvert \delta^{m_6} \left( A_{1} \delta^{m_1 - m_6} + A_{2} \delta^{m_2 - m_6} + A_3 \delta^{m_3 - m_6} + A_4 \delta^{m_4 - m_6} + A_5 \delta^{m_5 - m_6} + A_6 \right) \right\rvert
\end{align}
Assume left-hand side of \eqref{eqn22} is zero. Let $i = m_5 - m_6, j = m_4 - m_6, k = m_3 - m_6, l = m_2 - m_6, m = m_1 - m_6$. Then
\begin{align*}
    m & \leq \frac{\log (16220160 X^9)}{\log \psi},~~~ l \leq \frac{\log (1474560 X^8)}{\log \psi},~~~ k \leq \frac{\log (23040 X^6)}{\log \psi},\\
    &j \leq \frac{\log (640 X^4)}{\log \psi} ~~~\text{and}~~~i \leq \frac{\log (20 X^2)}{\log \psi}
\end{align*}
are bounded. Therefore one can find all polynomials $A_{1}X^m + A_{2}X^l + A_{3}X^k + A_{4}X^j + A_5 X^i + A_6$ and took that polynomial which has a root $\gamma$, where $\gamma$ is a quadratic unit of norm $1$.  For such, \eqref{eqn22} holds for all $m_6$. Hence, we have the parametric family of solutions

\[
(m_1, m_2, m_3, m_4, m_5, m_6) = (m_6 + m, m_6 + l, m_6 + k, m_6 + j, m_6 + i, m_6).
\]
From now on assume that left-hand side of \eqref{eqn22} is non-zero. Thus 
\begin{align*}
    &\left\rvert A_{1} \gamma^{m_1 - m_6} + A_{2} \gamma^{m_2 - m_6} + A_3 \gamma^{m_3 - m_6} + A_4 \gamma^{m_4 - m_6} + A_5 \gamma^{m_5 - m_6} + A_6 \right\lvert\\
    &~~~~~~~~~~~~~~~~~~~~~~~\times \left\rvert A_{1} \delta^{m_1 - m_6} + A_{2} \delta^{m_2 - m_6} + A_3 \delta^{m_3 - m_6} + A_4 \delta^{m_4 - m_6} + A_5 \delta^{m_5 - m_6} + A_6 \right\lvert \geq 1.
\end{align*}
Left–hand side second factor of the above equation is $\leq 6 X$. Hence
\[
\left\rvert A_{1} \gamma^{m_1 - m_6} + A_{2} \gamma^{m_2 - m_6} + A_3 \gamma^{m_3 - m_6} + A_4 \gamma^{m_4 - m_6} + A_5 \gamma^{m_5 - m_6} + A_6 \right\lvert \geq \frac{1}{6X}.
\]
Hence, equation \eqref{eqn22} becomes
\[
\frac{\gamma^{m_6}}{6X} \leq 6 \delta^{m_6} = \frac{6X}{\gamma^{m_6}},
\]
which gives 
\begin{equation}\label{eqn23}
  m_6 \leq \frac{\log (6X)}{\log \psi}.
\end{equation}
Using \eqref{eqn10}, \eqref{eqn15}, \eqref{kk}, \eqref{eqn18} and \eqref{eqn21}, we have
\begin{align}\label{eq*}
&m_6 \leq \frac{\log (6 X)}{\log \psi}, \nonumber \\
&m_5 \leq \frac{\log (120 X^3)}{\log \psi}, \nonumber \\
 &m_4 \leq \frac{\log (3840 X^5)}{\log \psi}, \nonumber \\
 &m_3 \leq \frac{\log (138240 X^7)}{\log \psi},  \\
 &m_2 \leq \frac{\log (8847360 X^9)}{\log \psi}, \nonumber \\
 &m_1 \leq \frac{\log (97320960 X^{10})}{\log \psi}. \nonumber
\end{align}
Note that \eqref{eq*} contains \eqref{eqn20}, \eqref{eq**}, \eqref{eqn17}, \eqref{eq***}, \eqref{****}, \eqref{eq****}, \eqref{eqn13} and \eqref{eqn14}. Recalling that we have to replace $X$ by $2X$, this completes the proof of the Theorem \eqref{th:1}.

\begin{center}
    \section{Numerical example}
\end{center}
Take $A_1, A_2, A_3, A_4, A_5, A_6 \in \{0,\pm 1\}$. Hence $X = 1$, therefore $A \leq 308$. Thus, Theorem \ref{th:1} says that the sporadic solutions are of the form
\[
x_{m_{1}} \pm A_2 x_{m_{2}} \pm A_3 x_{m_{3}} \pm A_4 x_{m_{4}} \pm A_5 x_{m_{5}}\pm A_6 x_{m_{6}}=0,
\]
where
$A_2, A_3, A_4, A_5, A_6 \in \{0,\pm 1\}$, $m_1 > m_2 \geq m_3 \geq m_4 \geq m_5 \geq m_6 \geq 0$. Here $m_6 < 3, m_5 < 8, m_4 < 13, m_3 < 18, m_2 < 24$ and $m_1 > m_2$. Now for $r \in [3, 308], m_6 \in [0, 3], m_5 \in [m_6, 8], m_4 \in [m_5, 13], m_3 \in [m_4, 24], m_2 \in [m_3, 24], \alpha_{1} \in \{0,1\}$ and  $\alpha_{2}, \alpha_{3}, \alpha_{4}, \alpha_{5} \in \{0, \pm 1\}$, we search the equation
\[
x_{m_{1}} = \mid \alpha_{2} x_{m_{2}} + \alpha_{3} x_{m_{3}} + \alpha_{4} x_{m_{4}} + \alpha_{5} x_{m_{5}} + \alpha_{6} x_{m_{6}} \mid,
\]
holds for some $m_1 > m_2$. Using {\it Mathematica} software and find only one solution when $A = 5$, that is,
\[
x_2 = x_1 + x_1 + x_1 + x_1 + x_1. 
\]

\begin{flushright}
    \begin{tabular}{l}
    \textit{Department of Mathematics}\\
    \textit{KIIT Deemed to be University, Bhubaneswar, India}\\
\textit{iiit.bijan@gmail.com; bijan.patelfma@kiit.ac.in} \\
    \end{tabular}
\end{flushright} 

\begin{flushright}
    \begin{tabular}{l}
    \textit{Department of Mathematics}\\
    \textit{IISER Bhopal, Madhya Pradesh, India}\\
\textit{prashanttiwaridav@gmail.com; ptiwari@iiserb.ac.in} \\
    \end{tabular}
\end{flushright}
\end{document}